\title{Unstable Vassiliev Theory} 
\author{Chad Giusti}
\newtheorem{thm}{Theorem}[section]
\newtheorem{lem}[thm]{Lemma}
\newtheorem{prop}[thm]{Proposition}
\theoremstyle{definition}
\newtheorem{defn}[thm]{Definition}
\newtheorem{example}[thm]{Example}
\newcommand{\into}{\hookrightarrow}
\newcommand{\dirlim}{\underrightarrow\lim}
\newcommand{\invlim}{\underleftarrow\lim}
\newcommand{\e}{{\mathbf e}}
\newcommand{\te}{\tilde{\e}}
\newcommand{\f}{{\mathbf f}}
\newcommand{\tf}{\tilde{\f}}
\newcommand{\tS}{\tilde{S}}
\newcommand{\vsig}{\vec{\sigma}}
\newcommand{\bC}{\mathbf{C}}
\newcommand{\Cell}{\text{\sc Cell}}
\newcommand{\cx}{\textrm{\sc cx}}
\newcommand{\refT}[1]{Theorem~\ref{T:#1}}
\newcommand{\refD}[1]{Definition~\ref{D:#1}}
\newcommand{\refL}[1]{Lemma~\ref{L:#1}}
\newcommand{\refF}[1]{Figure~\ref{F:#1}}
\begin{document}

\begin{abstract}    
We construct an inverse system of unstable Vassiliev spectral sequences on the spaces of plumbers' knots. Utilizing the cell structure on these spaces, we extend the notion of Vassiliev derivative to all singularity types of plumbers' knots. 
\end{abstract}

\maketitle

In \cite{Vas90}, Vassiliev initiated the study of finite-type invariants by constructing the spectral sequence which bears his name and analyzing the combinatorics of its $E_1$-page. Due to the highly technical nature of its construction, including the use of the weak transversality theorem to perturb polynomial mapping spaces, few other than Vassiliev himself have built upon this approach. The principal tool in the study of finite-type invariants has instead has been the notion of the Vassiliev derivative introduced by Birman and Lin \cite{BirmanLin} and made popular by Bar-Natan \cite{NatanOnVas}. 

We believe that there remains a great deal to be learned through a geometric analysis of the discriminant. Several authors, notably Randell \cite{Randell98, Randell02}, Calvo \cite{Calvo} and Stanford (in unpublished work), have approached this problem by replacing Vassiliev's choice of polynomial knot spaces by the spaces of (piecewise-linear) stick knots. The discriminant in these spaces is constructed from partial cubic hypersurfaces and has hardly been more amenable to comprehensive description. 

We base our construction on a directed system of spaces called ``plumbers knots", constructed by the author as a model for classical knot theory in \cite{Plumbers}. The discriminant in this setting is a union of partial hyperplanes and admits a natural cell structure. In this context, we extend the notion of Vassiliev derivative to work for any singularity of plumbers' maps and introduce the notion of a Vassiliev system for a knot invariant. Using this structure, we produce an honest inverse system of ``unstable'' Vassiliev spectral sequences whose limiting sequence's $E_\infty$ page contains that of the classical Vassiliev spectral sequence. In contrast to the Vassiliev's ``stable range'' construction, each such unstable sequence carries information about all singularities arising in the space of plumbers' curves on which it is constructed. In exchange for more intricate combinatorics, this provides us with complete data regarding the evolution of knot invariants through the system.

\subsection*{Acknowledgements} The author would like to thank his advisor, Dev Sinha, for his support, expertise and patience during the development of this material.

\tableofcontents

\section{Conceptual Vassiliev theory}\label{conceptual}

The foundation of Vassiliev's approach to knot theory in \cite{Vas90} is that rather than analyzing properties of individual knots he applies the tools of algebraic topology to the space of all knots, $\mathcal{K}$. To ease digestion of the details of our construction of the ``unstable" Vassiliev spectral sequence, we begin with an exposition of the conceptual framework Vassiliev used in his original spectral sequence construction.

The complement of $\mathcal{K}$ in the space of all immersions, or \emph{discriminant} of $\mathcal{K}$, intersects itself in arbitrarily complex ways, as conceptually illustrated in \refF{discriminant}. Vassiliev's initial object was to ``resolve" this singular space, replacing it with a union of smooth objects. The most natural such construction replaces points of the discriminant corresponding to curves with $n$ transverse double points by $n-1$ simplices, as in \refF{resolved}. More complex singularities do not fit easily into this picture, but do not ``generically" occur, so Vassiliev discarded them in favor of approchable combinatorics. Correspondingly, the spectral sequence he constructs fails to see any data carried in these singularities. Thus, for example, the completeness question for finite-type invariants is one of whether the remaining information is ``dense" in the collection of all knot invariants and not merely a $\lim^1$ question as is commonly assumed.

Vassiliev then introduced a logical ordering on these singularities by ``complexity", providing a filtration on the resolved discriminant. In the filtration quotient, the boundaries of the simplices introduced in the resolution collapse, leaving a collection of combinatorial codimension one cycles which live on the $E^1$ page of the spectral sequence of the filtration.

From the standpoint of algebraic topology, knot invariants are classes in $H^0(\mathcal{K})$. Recall that the Alexander dual to a zero dimensional reduced cocycle $[\alpha]$ in a subspace $X\subseteq \mathbb{R}^n$ is a codimension one cycle $[\alpha^\vee]$ in $(\mathbb{R}^n\setminus{X})^+$. These cycles have as canonical chain representatives the sum of the codimension one chains of $X$ with coefficients given by the difference in values of $[\alpha]$ on its cobounding regions. Intuitively, the Alexander dual of a zero cocycle is the collection of its ``derivatives" as one changes components along a path like that in \refF{discriminant}.

What Vassiliev discovered is that the cycles on the $E^1$ page of his spectral sequence correspond to a notion of higher derivatives. At the chain level, the coefficient of each simplex encodes the change of value of an invariant of curves with fewer singular points. For example, a path between isotopy classes of curves with a single transverse double point generically passes through a finite number of regions corresponding to curves with two double points, potentially changing the coefficient of the Alexander dual at each crossing, suggesting a ``second derivative".  Those linear combinations of derivatives which survive to the $E^{\infty}$ page correspond to knot invariants.

Each cocycle in the $\mathcal{E}^1_{-n,n}$ line of Vassiliev's spectral sequence is associated to a weight system, so cocycles on the $\mathcal{E}^\infty$ page are represented by linear combinations of such. Denote by $\mathcal{FT}_\bullet = \bigoplus_n \mathcal{E}_{-n,n}^\infty$, the associated graded to this total degree zero line at the infinity page, and note that (with field coefficients) $\mathcal{FT}_\bullet$ is isomorphic to a subset of $\bar{H}^0(\mathcal{K})$. Call a non-zero element of $\bar{H}^0(\mathcal{K})$ which arises from $\mathcal{FT}_n$ an invariant of type $n$, and the collection of all such \emph{finite type} or \emph{Vassiliev} invariants.

\begin{figure}
\begin{center}
\subfigure[A schematic diagram for a section of the discriminant of $\mathcal{K}$]{
\psfrag{A}{$A$}
\psfrag{B}{$B$}
\psfrag{C}{(b)}
\psfrag{D}{(c)}
\psfrag{E}{(d)}
\includegraphics[width=10cm]{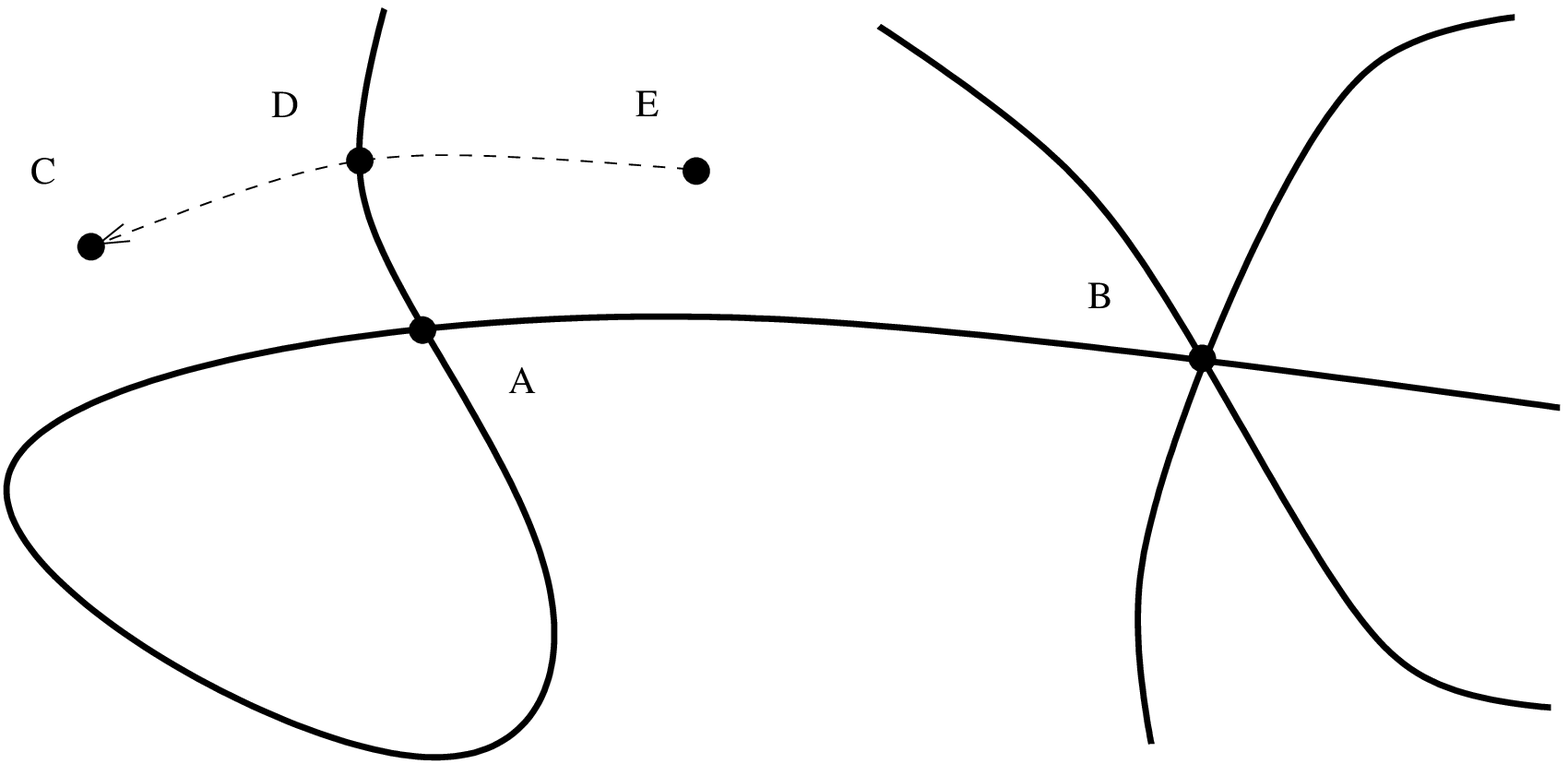}
}

\bigskip
\subfigure[A $5_2$ knot]{
\xy
(0,0)*{}="A";
(-2.56,3.2)*{\hole}="B";
(-2.56,3.2)*{}="B'";
(0,7.04)*{\hole}="C";
(0,7.04)*{}="C'";
(0,17.2)*{\hole}="D";
(0,17.2)*{}="D'";
(0,26.24)*{\hole}="E";
(0,26.24)*{}="E'";
(14.72,17.28)*{}="F";
(2.56,3.2)*{\hole}="G";
(2.56,3.2)*{}="G'";
(-14.72,17.28)*{}="H";
"A";"B" **\crv{(-3.2,0)} ;
"B";"C'" **\crv{(-1.92,5.76)} ;
"C'";"D" **\crv{(5.12,11.52)} ;
"D";"E'" **\crv{(-6.4,21.76)};
"E'";"F" **\crv{(7.04,31.36) & (14.72,25.6)};
"F";"G" **\crv{(14.72,4.48)};
"G";"B'" **\crv{(0,2.88)};
"B'";"H" **\crv{(-14.72,4.48)};
"H";"E" **\crv{(-14.72,25.6) & (-7.04,31.36)};
"E";"D'" **\crv{(6.4,21.76)};
"D'";"C" **\crv{(-5.12,11.52)};
"C";"G'" **\crv{(1.92,5.76)};
"G'";"A" **\crv{(3.2,0)};
\endxy 
}\hspace{1cm}
\subfigure[A singular curve bounding the $5_2$ knot and the trefoil]{
\xy
(0,0)*{}="A";
(-2.56,3.2)*{\hole}="B";
(-2.56,3.2)*{}="B'";
(0,7.04)*{\hole}="C";
(0,7.04)*{}="C'";
(0,7.04)*{\bullet}="C''";
(0,17.2)*{\hole}="D";
(0,17.2)*{}="D'";
(0,26.24)*{\hole}="E";
(0,26.24)*{}="E'";
(14.72,17.28)*{}="F";
(2.56,3.2)*{\hole}="G";
(2.56,3.2)*{}="G'";
(-14.72,17.28)*{}="H";
"A";"B" **\crv{(-3.2,0)} ;
"B";"C'" **\crv{(-1.92,5.76)} ;
"C'";"D" **\crv{(5.12,11.52)} ;
"D";"E'" **\crv{(-6.4,21.76)};
"E'";"F" **\crv{(7.04,31.36) & (14.72,25.6)};
"F";"G" **\crv{(14.72,4.48)};
"G";"B'" **\crv{(0,2.88)};
"B'";"H" **\crv{(-14.72,4.48)};
"H";"E" **\crv{(-14.72,25.6) & (-7.04,31.36)};
"E";"D'" **\crv{(6.4,21.76)};
"D'";"C'" **\crv{(-5.12,11.52)};
"C'";"G'" **\crv{(1.92,5.76)};
"G'";"A" **\crv{(3.2,0)};
\endxy 
}\hspace{1cm}
\subfigure[A trefoil]{
\xy
(0,0)*{}="A";
(-2.56,3.2)*{\hole}="B";
(-2.56,3.2)*{}="B'";
(0,7.04)*{\hole}="C";
(0,7.04)*{}="C'";
(0,17.2)*{\hole}="D";
(0,17.2)*{}="D'";
(0,26.24)*{\hole}="E";
(0,26.24)*{}="E'";
(14.72,17.28)*{}="F";
(2.56,3.2)*{\hole}="G";
(2.56,3.2)*{}="G'";
(-14.72,17.28)*{}="H";
"A";"B" **\crv{(-3.2,0)} ;
"B";"C" **\crv{(-1.92,5.76)} ;
"C";"D" **\crv{(5.12,11.52)} ;
"D";"E'" **\crv{(-6.4,21.76)};
"E'";"F" **\crv{(7.04,31.36) & (14.72,25.6)};
"F";"G" **\crv{(14.72,4.48)};
"G";"B'" **\crv{(0,2.88)};
"B'";"H" **\crv{(-14.72,4.48)};
"H";"E" **\crv{(-14.72,25.6) & (-7.04,31.36)};
"E";"D'" **\crv{(6.4,21.76)};
"D'";"C'" **\crv{(-5.12,11.52)};
"C'";"G'" **\crv{(1.92,5.76)};
"G'";"A" **\crv{(3.2,0)};
\endxy 

}
\end{center}
\caption{A cross-section of the discriminant of the knot space.}
\label{F:discriminant}
\end{figure}

\begin{figure}
\psfrag{a}{$\alpha$}
\psfrag{b}{$\beta$}
\vspace{30pt}
\begin{center}
\includegraphics[width=10cm]{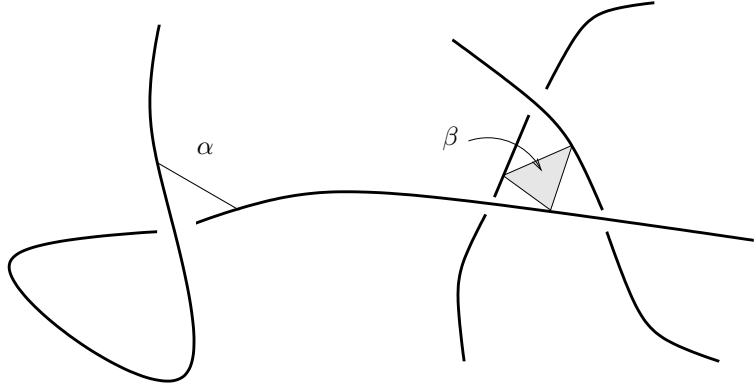}
\end{center}
\caption{Vassiliev's resolution of the discriminant; under the canonical projection, the simplex $\alpha$ maps to the point $A$ in \refF{discriminant} and $\beta$ to $B$.}
\label{F:resolved}
\end{figure}

In this paper, we revisit Vassiliev's construction in a more geometric context, the spaces of plumbers' curves. The restrictive nature of the singularity types which can occur in plumbers' curves lets us work on their entire discriminant, allowing us to define the Vassiliev derivative for all singularity types of plumbers' curves. Closely related to this, we are able to construct spectral sequences which retain all singularity information. 

\section{Plumbers' curves}

In \cite{Plumbers}, the author develops a finite-complexity knot theory called \emph{plumbers' knots}, the pertinent details of which we now briefly recall.

An \emph{$m$-move plumbers' curve} $\phi_\mathbf{v}\colon\thinspace I \to [0,1]^3$ is uniquely determined by a collection $\mathbf{v}$ of $(m-1)$ vertices in $(0,1)^3$. The image of such a map moves along segments parallel to the coordinate axes $x$, $y$ and $z$ (in this order) from vertex to vertex, starting at the origin and ending at $(1,1,1)$. See \refF{plumbersknot} for an example.

The space $P_m$ of all such is homeomorphic to $\left((0,1)^3\right)^{m-1}$. Each segment parallel to an axis is called a \emph{pipe}, and pipes which are separated by three or more intervening pipes are called \emph{distant}. $K_m\subseteq P_m$ is the space of \emph{m-move plumbers' knots}, consisting of those plumbers' maps for which distant pipes do not intersect. One feature that distinguishes this theory from that of PL knots is that our definition of knot allows for up to two adjacent zero-length pipes in a knot. Additionally, there are stabilization maps $\iota_m\colon\thinspace P_m \into P_{m+1}$, under which $\dirlim K_m$ has the weak homotopy type of the space of long knots, so this system is a model for classical knot theory.

\begin{figure}
\begin{center}
\includegraphics[width=7cm]{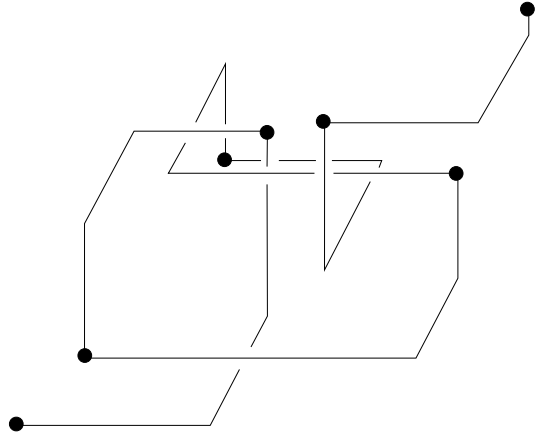}
\end{center}
\caption{A plumbers' knot of 6 moves.}
\label{F:plumbersknot}
\end{figure}

The space of plumbers' maps of $m$ moves admits a cellular decomposition generated by open cells homeomorphic to $\left(\Delta^{m-1}\right)^{\times 3}$. Each such cell is indexed by a triple of permutations of $(m-1)$ elements which describe the order in which the vertices appear when projected onto the $x$, $y$ or $z$-axes respectively. Write $\e(\sigma_x, \sigma_y, \sigma_z)$, $\sigma_x, \sigma_y, \sigma_z \in \Sigma_{m-1}$, for such a cell and $\Cell_\bullet(P_m)$ for the (polyhedral) CW structure defined by the cells. Where possible, we will abbreviate the triple $(\sigma_x, \sigma_y, \sigma_z)$ as $\vsig$ and write, for example, $\rho_x\vsig = (\rho_x\sigma_x, \sigma_y, \sigma_z)$ for the left action of $\Sigma_{m-1}$ on $\Sigma_{m-1}\times \{x,y,z\}$ in the indicated coordinate.

Boundaries of cells $\e(\vsig)\in\Cell_{3m-4}(P_m)$ are indexed by collections of coordinate equalities on the vertices which define their elements. We encode such an equality as a transposition decorated with a label indicating which coordinate it involves. For example, $(1\;2)_x$ means that the first and second vertex share x-coordinates, which is a valid boundary condition precisely when $(1\;2) = (\sigma_x(i)\;\sigma_x(i+1))$ for some choice of $i$.

Given a collection $\tau$ of transpositions and a cell $\e\in \Cell_\bullet(P_m)$ for whose elements all of the equalities indexed by $\tau$ hold, we say the cell \emph{respects $\tau$}. For example, $\e(3142_x, 4132_y, 1324_z)$ has several boundary cells which respect the set $\tau = \{(1\, 3)_x, (2\, 4)_x, (1\, 4)_y, (1\, 3)_y\}$, all of which also respect the set $\tau'=\{(1\, 3)_x, (1\, 4)_y\}$.


We will now establish notation for these boundary cells which will allow us to more easily describe the geometry of the spaces of singular plumbers curves. Vassiliev chooses to consider only collections of $k$-fold transverse intersections and points with vanishing derivative. We will instead investigate all possible singularities of plumbers' curves. Although the possible singularity types are more restrictive than those appearing for all smooth curves, our decision to retain all singularity information will result in more complex combinatorics.

\begin{defn}
Let $[\mathbf{m}]=\{1,\dots,m\}$ and $\mathcal{P}(S)$ be the power set of a set $S$. Fix a triple of permutations $\vsig \in \Sigma_{m-1}\times\{x,y,z\}$. Given an ordered pair $(i, d)\in \mathbf{[m-1]} \times \{x,y,z\}$, call $i$ the \emph{index} and $d$ the \emph{direction}.

We say that a set $C \in \mathcal{P}(\mathbf{[m-1]} \times \{x,y,z\})$ is \emph{admissible for $\vsig$} if all of the elements of $C$ share the same direction $\alpha$ and its indices are of the form $\sigma_\alpha(\{i, i+1,\dots, i+k\})$ for some $i,k$.
\end{defn}

For example, $C=\{1, 2, 4\}_x$ is admissible for $(3142_x, 4132_y, 1324_z)$. Sets which are admissible for $\vsig$ index the collections of coordinate equalities which can occur in the boundary of the cell $\e(\vsig)$.

Given a set $C$ which is admissible for $\vsig$, we can produce a collection of transpositions $\tau(C)$ which describe the coordinate equalities in $C$ compatibly with the order of the vertices in $\e(\vsig)$. To do so, we simply read off the transpositions in the order they appear in $\vsig$.

\begin{defn}
Define $\tau(C, \vsig) = \{(\sigma_\alpha(i)\;\sigma_\alpha(i+1)), (\sigma_\alpha(i+1)\;\sigma_\alpha(i+2)),\dots, (\sigma_\alpha(i+k-1)\;\sigma_\alpha(i+k))\})$.

We say such a collection of transpositions is \emph{sequential for $\vsig$}. When $\vsig$ is clear from context, we will supress it from notation.
\end{defn}

In the example above, $\tau(C, \vsig) = \{(1\;4)_x, (4\;2)_x\}$.

\begin{defn}\label{D:boundarycell}
Fix a triple of permutations $\vsig \in \Sigma_{m-1}\times\{x,y,z\}$. Let $\bC=\{C_1, C_2,\dots,C_k\}$ be a partition of $\mathbf{[m-1]} \times \{x,y,z\}$ into sets which are admissible for $\vsig$. Denote by $\e(\vsig; \bC)$ the cell of plumbers' curves obtained by setting equal precisely those coordinates of vertices which appear in the same $C_i$ and otherwise respecting the inequalities induced by $\vsig$. 
\end{defn}

Such a cell is a boundary of $\e(\vsig)$ of codimension $\sum |C_i| - |\bC|$. We will omit singletons when writing $\bC$, as these induce no equalities in the coordinates.

To continue our example with $\vsig = (3142_x, 4132_y, 1324_z)$, there is a boundary cell of $\e(\vsig)$ given by $\e(\vsig; \{1, 3\}_x, \{2, 4\}_x, \{1, 3, 4\}_y)$ whose codimension is $2 + 2 + 3 - 3 = 4$. Any boundary cell of $\e(\vsig)$ which respects $\tau = \{(1\, 3)_x, (2\, 4)_x, (1\, 4)_y, (1\, 3)_y\}$ is also a boundary of this cell.

We remark that cells of codimension one or greater are not uniquely named; we can rearrange any indices in $\vsig$ which appear in the same component of $\bC$ and to obtain another permutation $\vsig'$ and another label for the same cell, $\e(\vsig', \bC)$.  Another name for our cell is thus $\e(1342_x, 1342_y, 1324_z; \{1, 3\}_x, \{2, 4\}_x, \{1, 3, 4\}_y)$. This flexible naming convention will simplify the formula for the Vassiliev derivative.

\begin{defn}
Let $\vsig$ and $\bC$ be as in \refD{boundarycell}. Define $\Sigma_{\bC} = \prod_{i=1}^k \Sigma_{C_i}$, where $\Sigma_{C_i}$ is the symmetric group on the elements of $C_i$.
\end{defn}

All possible names for a given cell $\e(\vsig; \bC)$ are given by $\e(\rho\vsig;\bC)$ for $\rho\in \Sigma_\bC$. \medskip

The principal object of interest here, $S_m = P_m\setminus K_m$, is the \emph{discriminant}, consisting of all singular plumbers' maps. $S_m$ inherits a cellular structure from $P_m$ in the form of a closed $3m-4$ dimensional subcomplex $\Cell_\bullet(S_m)\subseteq \Cell_\bullet(P_m)$.

This cell structure leads to a convenient decomposition of the space $S_m$. Denote by $I \choose 2$ the collection of two element subsets of $I$. 

\begin{defn}
Define $\mathcal{S}_m$ be the $m$th \emph{coincidence category}, whose objects are non-empty elements of $\mathcal{P}\left({[\mathbf{m-1}] \choose 2}\times\{x,y,z\}\right)$ and whose morphisms are reverse inclusions.
\end{defn}

Elements of $\mathcal{S}_m$ are precisely our collections of transpositions, as in \refD{boundarycell}.

\begin{defn} \label{D:singcat}
Let $B_m \colon\thinspace \mathcal{S}_m \to \mathbf{Top}$ be the covariant functor given by $B_m(\tau) = \{\phi \in S_m : \phi \textrm{ respects } \tau\}$.
\end{defn}

Our analysis of the cell complex above now immediately gives us that

\begin{prop}
$S_m = \textrm{\upshape{colim}} B_m$.
\end{prop}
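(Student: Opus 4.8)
The plan is to use the fact that colimits in $\mathbf{Top}$ are computed by first taking the colimit of the underlying diagram of sets and then equipping it with the final topology with respect to the structure maps. Since every morphism of $\mathcal{S}_m$ is sent by $B_m$ to a genuine inclusion of subspaces of the common space $S_m$, both computations become transparent. Concretely, I would show (i) that the underlying set of $\operatorname{colim} B_m$ is the union $\bigcup_{\tau} B_m(\tau)$ and that this union is all of $S_m$, and (ii) that the resulting final topology agrees with the subspace topology $S_m$ inherits as a subcomplex of $\Cell_\bullet(P_m)$.

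For (i), the first point is that any two objects $\tau,\tau'$ of $\mathcal{S}_m$ admit a common refinement: the union $\tau\cup\tau'$ is again a nonempty collection of decorated transpositions, hence an object, and reverse inclusion supplies morphisms $\tau\cup\tau'\to\tau$ and $\tau\cup\tau'\to\tau'$. A map respects both $\tau$ and $\tau'$ exactly when it respects $\tau\cup\tau'$, so $B_m(\tau)\cap B_m(\tau') = B_m(\tau\cup\tau')$ and every map lying in two pieces of the diagram is already connected through a lower object. Consequently the equivalence relation defining the set-level colimit identifies precisely the copies of a given map, and $\operatorname{colim} B_m = \bigcup_\tau B_m(\tau)$ as a set. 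The inclusion $\bigcup_\tau B_m(\tau)\subseteq S_m$ is immediate from \refD{singcat}. For the reverse inclusion I would invoke the cellular analysis: $S_m$ is a subcomplex of codimension one, so any $\phi\in S_m$ lies in the interior of some cell $\e(\vsig;\bC)$ with $\bC$ containing a block of size at least two; the associated sequential transpositions $\tau = \bigcup_i \tau(C_i,\vsig)$ then form a nonempty object of $\mathcal{S}_m$ which $\phi$ respects, so $\phi\in B_m(\tau)$.

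For (ii), I would observe that the set ${[\mathbf{m-1}] \choose 2}\times\{x,y,z\}$ is finite, so the diagram $B_m$ has only finitely many objects and the cover $\{B_m(\tau)\}$ of $S_m$ is finite. Each $B_m(\tau)$ is cut out of $S_m$ by a finite system of coordinate equalities, equivalently is a subcomplex of $\Cell_\bullet(S_m)$, and is therefore closed. Since the final topology with respect to a finite cover by closed subspaces coincides with the ambient subspace topology --- a subset whose intersection with every member of the cover is closed is a finite union of closed sets, hence closed --- the continuous bijection $\operatorname{colim} B_m \to S_m$ is a homeomorphism.

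The one step that genuinely uses the geometry of plumbers' curves, and which I expect to be the crux, is the reverse set inclusion $S_m \subseteq \bigcup_\tau B_m(\tau)$: one must know that a plumbers' map is singular only by virtue of intersecting distant pipes, and that such an intersection forces at least one coincidence of vertex coordinates, so that every point of the discriminant respects a nonempty collection of decorated transpositions. Everything else is the formal computation of a colimit in $\mathbf{Top}$ together with the finite-closed-cover lemma, which is why the statement follows ``immediately'' once the cell structure of $S_m$ has been set up.
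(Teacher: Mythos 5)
Your proof is correct, and it is essentially the argument the paper has in mind: the paper offers no written proof beyond asserting the proposition follows ``immediately'' from the preceding cell-complex analysis, and your write-up is exactly that analysis made explicit --- the $B_m(\tau)$ are closed subcomplexes covering $S_m$ (since $S_m$ is a codimension-one subcomplex, every singular map respects some nonempty $\tau$), intersections $B_m(\tau)\cap B_m(\tau')=B_m(\tau\cup\tau')$ are again objects of the diagram, and the finite closed cover forces the final topology to agree with the subspace topology. No discrepancy with the paper's approach; you have simply supplied the formal details it leaves implicit.
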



\section{Vassiliev theory in the plumbers' knot setting}

Using the spaces of plumbers' curves, we now construct our unstable version of the Vassiliev spectral sequence. The rigid geometry of this setting allows us to streamline the definition of the blowup of the discriminant and to explicitly introduce a formula for the Vassiliev derivative of any singularity type. 

\subsection{The homotopical blowup of the discriminant}

The problem of understanding the geometry of the discriminant is precisely that of understanding an arrangement of partial real hyperplanes. It is natural to encode this intersection data through simplices. The combinatorial description of the discriminant as a colimit gives us the information we require to perform this encoding using the homotopy colimit. 

\begin{defn}
The \emph{homotopical blowup of the discriminant} is $\tS_m = \textrm{hocolim} B_m$.
\end{defn}

The discriminant we describe is a Reedy fibrant space, so the following proposition is an instance of the general construction considered, for example, as Application 13.6 in Dugger's clear expository paper on homotopy colimits \cite{DuggerHocolim}. While blowing up the discriminant in this manner is a standard technique, Vassiliev's complexity filtration produces a spectral sequence which is not equivalent to the one  recorded in \cite{DuggerHocolim} which arises from the usual simplicial filtration.

\begin{prop}\label{P:blowupcorrect}
The projection map $\pi\colon\thinspace\tS_m \to S_m$ is a homotopy equivalence.
\end{prop}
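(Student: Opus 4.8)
The plan is to reduce the statement to a standard cofinality/contractibility argument for homotopy colimits, using the fact established earlier that $S_m = \operatorname{colim} B_m$ while $\tS_m = \operatorname{hocolim} B_m$, with $\pi$ the canonical natural map from the homotopy colimit to the colimit.

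First I would recall the general criterion: for a diagram $B_m \co \mathcal{S}_m \to \mathbf{Top}$, the projection $\operatorname{hocolim} B_m \to \operatorname{colim} B_m$ is a weak equivalence provided the diagram is suitably cofibrant (e.g.\ Reedy cofibrant, or an open cover with the appropriate freeness of the indexing data), since in that case both the homotopy colimit and the colimit compute the same homotopy type. The excerpt signals exactly this route by remarking that the discriminant is Reedy fibrant and pointing to Application~13.6 in \cite{DuggerHocolim}, so I would first verify that $B_m$ satisfies the hypotheses of that result. Concretely, each value $B_m(\tau) = \{\phi \in S_m : \phi \text{ respects } \tau\}$ is the subspace of $P_m$ cut out by the coordinate equalities encoded in $\tau$ — a partial affine subspace, hence convex and in particular contractible — and the morphisms (reverse inclusions) are closed inclusions of such subspaces. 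The key structural point is that $\mathcal{S}_m$ is a poset (morphisms are reverse inclusions, so there is at most one morphism between any two objects), which makes the Reedy/cofibrancy bookkeeping tractable.

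The core of the argument is then to check that the cover $\{B_m(\tau)\}$ of $S_m$ is what Dugger calls a \emph{good} cover in the relevant sense: the intersection poset of the pieces faithfully records the combinatorics, and the natural map on each nerve-fiber is a homotopy equivalence. Since a point $\phi\in S_m$ lies in $B_m(\tau)$ precisely when $\phi$ respects $\tau$, the set of $\tau$ with $\phi \in B_m(\tau)$ has a maximal element — the full collection of coincidences exhibited by $\phi$ — and this subposet of $\mathcal{S}_m$ has a terminal object, hence contractible nerve. This is the fiberwise contractibility that forces $\pi$ to be a weak equivalence: over each point of $S_m$ the homotopy fiber of $\pi$ is the nerve of the category of $\tau$'s that $\phi$ respects, which is contractible. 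I would assemble these local statements into the global equivalence either by invoking the cited Application~13.6 directly or, if a self-contained argument is wanted, via the projection lemma for homotopy colimits over open covers together with the fact that all the pieces and their intersections (which are again of the form $B_m(\tau')$ for larger $\tau'$) are contractible partial affine subspaces.

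The main obstacle I anticipate is not the contractibility of the pieces — that is immediate from convexity of partial affine subspaces — but rather verifying that the decomposition $\{B_m(\tau)\}$ is genuinely an \emph{open} cover, or otherwise meets the point-set hypotheses of the projection lemma. The sets $B_m(\tau)$ are cut out by equalities, so they are naturally \emph{closed} rather than open subspaces, and $\mathcal{S}_m$ indexes them by reverse inclusion; one must confirm that the cellular (polyhedral CW) structure of $S_m$ makes this a cofibrant/Reedy-fibrant situation so that the closed-cover version of the projection lemma applies. This is exactly where the remark that the discriminant is Reedy fibrant does the work, and I would make that reducton explicit: identify the latching and matching objects of $B_m$ over the poset $\mathcal{S}_m$, confirm the matching maps are fibrations (which follows from the partial-hyperplane structure, as restriction to a face is a fibration onto its image), and then quote the general comparison theorem. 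Once the diagram is seen to be Reedy fibrant with contractible values and contractible homotopy fibers of $\pi$, the conclusion that $\pi$ is a homotopy equivalence is formal.
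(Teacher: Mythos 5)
Your overall strategy---identify $\pi$ as the canonical map $\operatorname{hocolim} B_m \to \operatorname{colim} B_m = S_m$ and verify the hypotheses of the projection-lemma-type result the paper cites from Dugger---is exactly the paper's route (the paper offers nothing beyond that citation). But your verification rests on a false claim: that each $B_m(\tau)$ is ``a partial affine subspace, hence convex and in particular contractible.'' By \refD{singcat}, $B_m(\tau)$ consists of the \emph{singular} maps respecting $\tau$; it is the intersection of the affine slice cut out by the equalities in $\tau$ with the discriminant $S_m$, not the slice itself. Coordinate coincidences between vertices do not by themselves force distant pipes to intersect (intersections require betweenness inequalities in addition to equalities), so $B_m(\tau)$ is in general a proper subset of the slice---a union of partial hyperplanes inside it---and is neither convex nor contractible. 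The claim is not merely unproved but irreparable: the poset $\mathcal{S}_m$ of all nonempty coincidence sets has contractible nerve (it is the barycentric subdivision of a simplex), so if all values of $B_m$ were contractible, homotopy invariance of the homotopy colimit would give $\tS_m \simeq \ast$, and your own conclusion that $\pi$ is an equivalence would then make $S_m$ contractible for every $m$. That is impossible, since by Alexander duality the homology of $S_m \simeq \tS_m$ records $\bar{H}^*(K_m)$, which is nonzero as soon as $K_m$ contains distinct knot types.

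Fortunately, contractibility of the pieces is not what the comparison theorem requires; cofibrancy is, and that is what you should verify instead. Each $B_m(\tau)$ is a closed subcomplex of the polyhedral complex $\Cell_\bullet(S_m)$, because respecting $\tau$ is a condition on entire cells (all maps in a cell share the same coincidence data, and equalities persist in closures); the diagram is closed under intersections, since $B_m(\tau) \cap B_m(\tau') = B_m(\tau \cup \tau')$ and $\mathcal{S}_m$ contains all unions; and every morphism in the diagram is an inclusion of subcomplexes, hence a cofibration. Two further corrections. First, your appeal to Reedy \emph{fibrancy} and to matching maps being fibrations (language inherited from the paper's own loose remark) is the wrong dual: fibrancy conditions govern homotopy limits, while the comparison $\operatorname{hocolim} \to \operatorname{colim}$ needs latching maps to be cofibrations, which is precisely the subcomplex condition above. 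Second, your fiberwise fallback is not formally sufficient as stated: contractibility of the point-set fibers of $\pi$ (the nerves of the coincidence posets, which indeed have initial objects) does not imply a weak equivalence for an arbitrary map. To make that argument honest, observe that over each open cell $\e(\vsig;\bC)$ the map $\pi$ is a trivial bundle with simplex fiber, and then glue over the skeletal filtration, or invoke cell-like map theory for finite-dimensional complexes.
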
	

Using this definition, there is a straightforward cell structure $C_*(\tS_m)$ which lies over $\Cell_\bullet(S_m)$. Taking the homotopy colimit of $B_m$ results in each cell $\e(\vsig; \bC)\in \Cell_\bullet(S_m)$ being replaced by a product of that cell with a simplex whose vertices are labelled by the collection of all transpositions in $\Sigma_\bC$. This construction differs from Vassiliev's: rather than discovering a cell structure on the filtration quotients of the resolved discriminant, we lift the existing structure to the entire discriminant in a canonical fashion.

\begin{defn}
Let $\e=\e(\vsig; \bC)\in \Cell_\bullet(S_m)$ and let $\rho$ be a nonempty collection of transpositions in $\Sigma_\bC$. Denote by $\ast$ the topological join and by $\rho(C_i)$ the transpositions in $\rho$ with support on $C_i$.

Define $\te(\vsig; \bC; \rho) = \e \times \ast_{i=1}^\ell \Delta^{{\rho(C_i)\choose 2}-1}\in C_*(\tS_m)$ to be the face of the simplex ``lying over" $\e(\vsig;\bC)$ indexed by the elements of $\rho$. 
\end{defn}

By definition, the collection of all such cells $\te(\vsig; \bC; \rho)$ is a complete cell structure for $\tS_m$. See \refF{cellstruct} for an illustration of $C_*(\tS_m)$ in a simple case. Write $\pi_\#$ for the induced map $C_\ast(\tS_m) \to \Cell_\bullet(S_m)$ which ``forgets $\rho$". 

It will be useful to abuse notation and extend our naming conventions to the plumbers' knots, which by necessity have empty singularity data, denoting by $\te(\vsig; \bC;\emptyset)$ the cell $\e(\vsig) \in \Cell_{3m-3}(K_m)$.

\begin{figure}
\psfrag{e123-12}{\small $\te(\vsig; \{1_y, 2_y\}; \{(1\;2)_y\})$}
\psfrag{e132-13}{\small $\te((2\;5)_y\vsig; \{1_y, 5_y\}; \{(1\;5)_y\})$}
\psfrag{e123-23}{\small $\te(\vsig; \{2_y, 5_y\}; \{(2\;5)_y\})$}
\psfrag{e213-13}{\small $\te((1\;2)_y\vsig; \{1_y, 5_y\}; \{(1\;5)_y\})$}
\psfrag{e321-12}{\small $\te((1\;5)_y\vsig; \{1_y, 2_y\}; \{(1\;2)_y\})$}
\psfrag{e321-23}{\small$\te((1\;5)_y\vsig; \{2_y, 5_y\}; \{(2\;5)_y\})$}
\psfrag{e123-(12)}{\small$\te(\vsig; \{1_y, 2_y, 5_y\}; \{(1\;2)_y\})$}
\psfrag{e123-(23)}{\small$\te(\vsig; \{1_y, 2_y, 5_y\}; \{(2\;5)_y\})$}
\psfrag{e123-(13)}{\small$\te(\vsig; \{1_y, 2_y, 5_y\}; \{(1\;5)_y\})$}
\psfrag{e123-1213}{\small$\te(\vsig; \{1_y, 2_y, 5_y\}; \{(1\;2)_y, (1\;5)_y\})$}
\psfrag{e123-1223}{\small$\te(\vsig; \{1_y, 2_y, 5_y\}; \{(1\;2)_y, (2\;5)_y\})$}
\psfrag{e123-1323}{\small$\te(\vsig; \{1_y, 2_y, 5_y\}; \{(1\;5)_y, (2\;5)_y\})$}
\psfrag{e123-121323}{\small$\te(\vsig; \{1_y, 2_y, 5_y\};$}
\psfrag{e123-121323-2}{\small$\{(1\;2)_y, (1\;5)_y, (2\;5)_y\})$}
\begin{center}
\includegraphics[width=14cm]{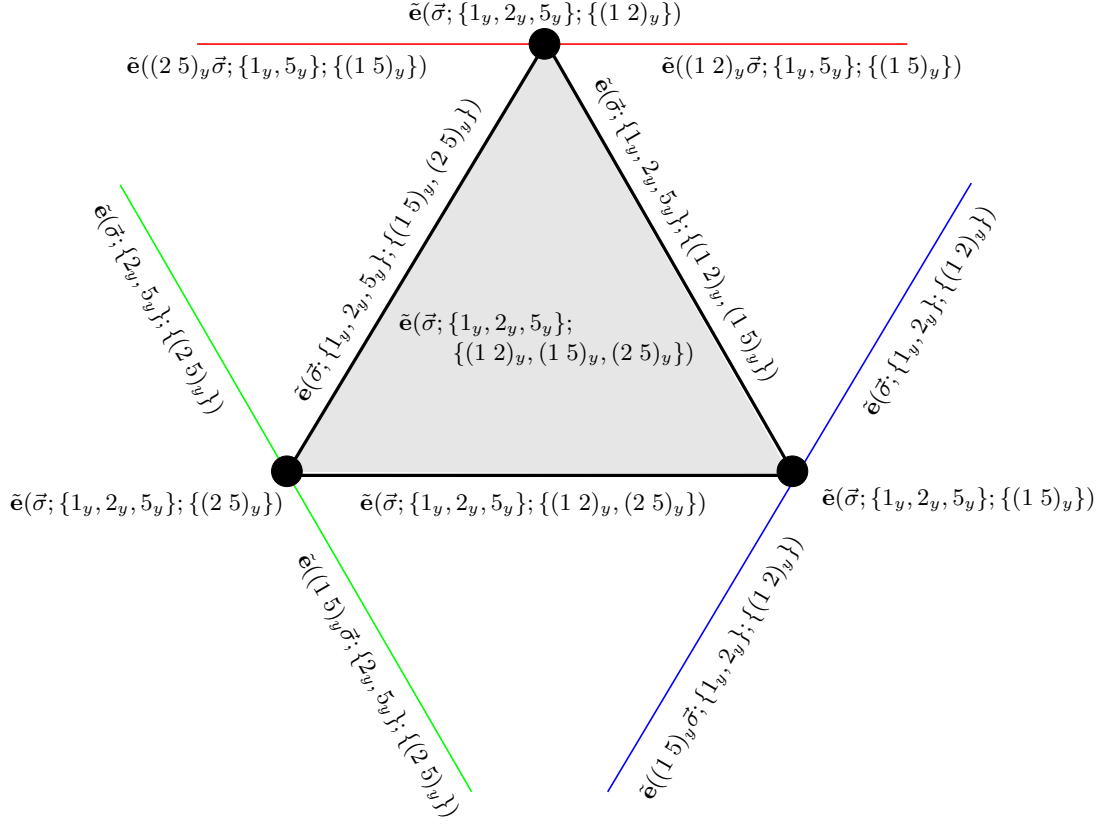}
\end{center}
\caption{The cell stucture over a non-transverse triple intersection in the discriminant at, for example, $\e((25134_x, 41253_y, 35241_z); \{1,2,5\}_y)$.}
\label{F:cellstruct}
\end{figure}

By the Leibniz rule, the boundaries of cells in this complex decompose into an \emph{external} component inherited from the boundary maps in $\Cell_\bullet(S_m)$ and an \emph{internal} component induced by the combinatorics of the join of simplices. The external boundary component introduces new equalities of coordinates in $\bC$, whereas the internal component deletes transpositions from $\rho$.

That is, the boundary of $\e(\vsig; \bC; \rho)\in C_*(\tS_m)$ is given by

\begin{equation*}
d(\e(\vsig; \bC; \rho)) = \sum_{\bC'} \pm \e(\vsig; \bC'; \rho)+ \sum_{\rho_i \in \rho} \pm \e(\vsig; \bC; \rho \setminus \{\rho_i\}),
\end{equation*}

\noindent where $\bC'$ range over coarsenings of the partition $\bC$ produced by combining precisely two elements of $\bC$ so that the resulting sets are admissible for $\vsig$, and the signs alternate with respect to the lexicographic ordering in both sums.

This cell structure contains both the singularity data from the original discriminant and combinatorial data analagous to that in Vassiliev's auxillary spectral sequences from \cite{Vas90}. This wealth of combinatorial data allows us to perform detailed analysis at the chain level. Indeed, we will see that there is a canonical choice of chain representative for a plumbers' knot invariant in our blowup.  


\subsection{The complexity filtration}

In order to construct the unstable Vassiliev spectral sequence on the spaces of plumbers' knots, we require an increasing \emph{complexity} filtration on the space $S_m$ which will induce such a filtration on its cell complex. However, as all maps in a cell share the same singularity data, we can directly define the filtration on $\Cell_\bullet(S_m)$. This filtration will then lift to the cell complex for $\tS_m$. 

As we wish to compare this spectral sequence to the classical Vassiliev spectral sequence, we will construct our filtration so that ``stable" singularity data appears in the expected filtration. The filtration on other maps is then determined by choosing the greatest complexity amongst cells which such a cell bounds. 

Due to the rigid geometry of plumbers' maps, very few configurations of transverse intersections of pipes are possible. Isolated triple points only occur when three pipes, one parallel to each of the coordinate axes, intersect in a single point. It is impossible to produce an isolated quadruple (or higher) point. Since double and triple points are the only singularities considered in classical Vassiliev theory, thus suggests that plumbers' knots are naturally suited to this analysis. As we can use plumbers' knots as a model for classical knot theory, this observation gives further circumstantial evidence that Vassiliev's invariants should be a complete system of knot invariants.

\begin{defn}\label{D:simple} Call a plumbers' curve whose only singularities are transverse double points \emph{simple}. If $\e \in \Cell_\bullet(S_m)$ is a cell whose points are simple plumbers' curves, call $e$ \emph{simple} as well.
\end{defn}

Simple curves are those with generic singularity data. However, some pipes may intersect two or more other pipes, which is an unstable condition.

\begin{defn}\label{D:complexity} Let $\e \in \Cell_\bullet(S_m)$ be simple.  Define the \emph{complexity} of $\e$, \cx$(\e)$, to be the number of double points of a curve in $e$.

 For $\e \in \Cell_k(S_m)$ which contains a singularity other than isolated double points, define
\begin{equation*}
\textrm{\sc{cx}}(\e) = \max\{\textrm{\sc{cx}}(\mathbf{f}) | \mathbf{f} \in \Cell_{k+1}(S_m) \textrm{ and } \e \in \partial(\mathbf{f})\}.
\end{equation*}
\end{defn}

As triple points only occur in the boundary of cells with a pair of double points, we recover that isolated triple points have complexity 2. Similarly, pauses of three consecutive zero-length pipes are in the boundary of the transverse double point where the curve ``turns back through itself" in the span of four pipes.

\begin{defn}
Let $F_p\Cell_\bullet(S_m) = \{\e \in \Cell_\bullet(S_m) | \textrm{\sc{cx}}(\e) \leq p\}$. 
\end{defn}

We observe that the maximal number of transverse self intersections of a plumbers' curve occurs when all of its defining vertices lie in a single plane. The maximal number of transverse intersections rectilinear motion can produce in a fixed number of pipes tells us that $F_{{m-1 \choose 2}}\Cell_\bullet(S_m) = \Cell_\bullet(S_m)$.

Using the descent of this filtration to the space $S_m$, we may generalize the notion of isotopy to all plumbers' curves. 

\begin{defn}
Two singular plumbers' curves $\phi, \phi'\in S_m$ are \emph{isotopic} if there exists a path $\Phi\colon\thinspace I\to S_m$ with $\Phi(0)=\phi, \Phi(1)=\phi'$ and $\Phi(I) \subseteq (F_p \setminus F_{p-1}) S_m$ for some $p$.
\end{defn}

Finally, we lift this filtration to $F_pC_*(\tS_m) = \pi_\#^{-1}(F_p\Cell_\bullet(S_m))$. By construction, the boundary maps in $C_*(\tS_m)$ can never decrease complexity. As the suspension maps $S_m \into S_{m+1}$ do not change the image of a curve, they also respect this filtration. 

\subsection{The Vassiliev derivative of a plumbers' knot invariant} 

The standard approach to the study of Vassiliev invariants has been through the Vassiliev derivative, introduced by Birman and Lin \cite{BirmanLin} and popularized by Bar-Natan \cite{NatanOnVas}. The classical Vassiliev derivative is defined for $n$-fold double points and can be extended through the 4-term relation for triple points, but fails to see more degenerate singularities of smooth knots. Here, we define an analogue of the Vassiliev derivative for invariants of plumbers' curves across any choice of singular cell. 

\begin{defn} \label{D:cobound}
Fix $\te(\vsig; \bC; \rho) \in C_{*}(\tS_m)$, $C_i \in \bC$. Let $\rho(C_i)$ denote those transpositions of $\rho$ supported on $C_i$. If $\rho(C_i)$ is not sequential for any $\vsig$, define the $C_i$ coboundary of $\te(\vsig; \bC; \rho)$  to be zero. 

If $\rho(C_i)$ is sequential for some $\vsig'$, it is sequential for exactly two such, both with the property that $\te(\vsig';\bC;\rho) = \te(\vsig;\bC;\rho)$. These two choices of $\vsig'$ will differ by reversal of the order in which the elements of $C_i$ appear in the permutation. Let $\vsig[\rho(C_i)]^+$ be the one which occurs first in the underlying lexicographic ordering and $\vsig[\rho(C_i)]^-$ the other. Define the $C_i$-coboundary of $\te(\vsig; \bC; \rho)$ to be
\begin{equation*}
\delta_{C_i}(\te(\vsig; \bC; \rho)) =  \te(\vsig[\rho(C_i)]^+; \bC;\rho \setminus \rho(C_i)) - (-1)^{|C_i|} \te(\vsig[\rho(C_i)]^-;\bC;\rho \setminus \rho(C_i)) 
\end{equation*}
\end{defn}

The fact that the $C_i$ are disjoint immediately implies

\begin{lem}\label{L:partialscommute}
Let $\te(\vsig; \bC;\rho)\in C_*(\tS_m)$. For any pair $C_1, C_2 \in \bC$, $\delta_{C_1}\delta_{C_2} \te(\vsig; \bC;\rho) = \delta_{C_2}\delta_{C_1} \te(\vsig; \bC;\rho).$
\end{lem}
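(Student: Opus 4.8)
The plan is to exhibit $\delta_{C_1}$ and $\delta_{C_2}$ as operations on \emph{disjoint} portions of the combinatorial data naming a cell, so that composing them in either order yields the same formal sum of cells with the same coefficients. Since $C_1$ and $C_2$ are distinct parts of the partition $\bC$, they are disjoint as subsets of $[\mathbf{m-1}]\times\{x,y,z\}$. Consequently $\rho(C_1)$ and $\rho(C_2)$ are disjoint subcollections of $\rho$, and the blocks of indices these parts reorder inside $\vsig$ are disjoint: either they lie in different directions, or they lie in the same direction but in disjoint consecutive blocks of positions. This disjointness is exactly what underlies the direct-product decomposition $\Sigma_\bC = \prod_i \Sigma_{C_i}$.

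First I would record the two elementary consequences of disjointness that drive the argument. \emph{Deletion:} because $\rho(C_1)\cap\rho(C_2)=\emptyset$, we have $(\rho\setminus\rho(C_2))(C_1)=\rho(C_1)$ and $(\rho\setminus\rho(C_1))(C_2)=\rho(C_2)$, and $\rho\setminus\rho(C_1)\setminus\rho(C_2)$ is independent of the order of removal. \emph{Reordering:} the operations $\vsig\mapsto\vsig[\rho(C_1)]^{\pm}$ and $\vsig\mapsto\vsig[\rho(C_2)]^{\pm}$ from \refD{cobound} each permute only the indices of their own part, hence lie in the distinct direct factors $\Sigma_{C_1}$ and $\Sigma_{C_2}$ of $\Sigma_\bC$; they therefore commute, giving $\bigl(\vsig[\rho(C_1)]^{s_1}\bigr)[\rho(C_2)]^{s_2} = \bigl(\vsig[\rho(C_2)]^{s_2}\bigr)[\rho(C_1)]^{s_1}$ for all signs $s_1,s_2\in\{+,-\}$.

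Next I would expand both composites. Writing $\epsilon_i(+)=1$ and $\epsilon_i(-)=-(-1)^{|C_i|}$ for the coefficients in \refD{cobound}, applying $\delta_{C_2}$ and then $\delta_{C_1}$ to $\te(\vsig;\bC;\rho)$ produces, via the deletion identities, the four-term sum
\begin{equation*}
\sum_{s_1,s_2\in\{+,-\}} \epsilon_1(s_1)\,\epsilon_2(s_2)\, \te\!\left( \bigl(\vsig[\rho(C_2)]^{s_2}\bigr)[\rho(C_1)]^{s_1};\ \bC;\ \rho\setminus\rho(C_1)\setminus\rho(C_2) \right),
\end{equation*}
and applying $\delta_{C_1}$ then $\delta_{C_2}$ gives the identical sum with the two subscripts interchanged. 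By the commutation of reorderings and deletions these two sums have termwise identical cells, and since $\epsilon_1(s_1)\,\epsilon_2(s_2)$ is symmetric in its two factors the coefficients agree as well. The vanishing cases also match: $\delta_{C_i}$ returns zero exactly when $\rho(C_i)$ fails to be sequential, a condition on the $C_i$-block alone, and neither $\delta_{C_2}$ nor the passage to $\rho\setminus\rho(C_2)$ disturbs the $C_1$-block or $\rho(C_1)$ (and symmetrically), so both composites vanish under precisely the same circumstances.

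The one point requiring genuine care — and the step I expect to be the crux — is that the sign label $\pm$ for $C_1$ is computed consistently whether or not $\delta_{C_2}$ has already been applied; that is, that the lexicographic selection of $\vsig[\rho(C_1)]^{+}$ is governed solely by the arrangement of the $C_1$-indices. I would verify this by noting that the two candidates $\vsig[\rho(C_1)]^{+}$ and $\vsig[\rho(C_1)]^{-}$ differ only within the positions occupied by the $C_1$-indices, so the first position at which they disagree under the underlying lexicographic order lies in that block. Any reordering coming from $\delta_{C_2}$ alters only the disjoint $C_2$-block, moving neither those positions nor the values recorded there, and hence cannot change which candidate is lexicographically first. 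With this locality established, the termwise matching above is complete and the two composites coincide.
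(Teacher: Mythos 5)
Your proof is correct and takes essentially the same approach as the paper: the paper derives this lemma as an immediate consequence of the disjointness of the $C_i$, which is precisely the observation your argument elaborates. Your careful verification that the lexicographic $\pm$ labels are determined locally by the $C_1$-block (and hence undisturbed by $\delta_{C_2}$, and symmetrically) simply fills in the detail the paper leaves implicit.
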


\begin{defn}
Define the \emph{total coboundary} of $\te=\te(\vsig; \bC; \rho)$, written $\delta_\bC(\te)$, to be the element of $\Cell_{3n-3}(K_m)$ resulting from composing, in any order, all of the $\delta_{C_i}$ for $C_i\in \bC$. 
\end{defn}

\refL{partialscommute} then says that the total coboundary is well defined.

Note that when $\bC$ has a single component $\delta_\bC(\te)$ is the ``signed difference" of two cells in $\Cell_{3n-3}(K_m)$, as in \refF{cellstruct}. In this sense, certain codimension one cells in the blowup ``separate" pairs of cells from $P_m$ containing plumbers' curves. With this intuition, the following definition agrees with our understanding from Section \ref{conceptual} of the Vassiliev derivative.

\begin{defn}\label{D:VasDeriv}
Let $[\alpha]\in \bar{H}^0(K_m)$ and $\te=\te(\vsig; \bC; \rho)\in C_{3m-4}(\tS_m)$. The \emph{Vassiliev derivative of $[\alpha]$ at $\te$}, $d_{\te}([\alpha])$, is $[\alpha](\delta_{\bC}(\te))$.
\end{defn}

In contrast to the classical Vassiliev derivative, this definition works for any singularity of plumbers' curve. A straightforward computation shows that

\begin{prop}
The Vassiliev derivative is an isotopy invariant for singular plumbers' curves.
\end{prop}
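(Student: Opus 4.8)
The plan is to unwind \refD{VasDeriv} and reduce the claim to a local statement about how the coboundary chain $\delta_\bC(\te)$ behaves as the underlying singular curve is moved within a single complexity stratum. Since $[\alpha]\in\bar H^0(K_m)$ is locally constant on $K_m$, it assigns one value to each top chamber $\e(\vsig^*)\in\Cell_{3m-3}(K_m)$, depending only on the isotopy class of the corresponding knot. Thus $d_\te([\alpha])=[\alpha](\delta_\bC(\te))$ is a signed sum of invariant-values over the chambers in the support of $\delta_\bC(\te)$, with the signs prescribed by \refD{cobound}. The first point to record is that this number is a function of the cell $\te$ alone and not of its name: the orderings $\vsig[\rho(C_i)]^\pm$ are pinned down by the lexicographic convention in \refD{cobound}, hence by the cell itself, and \refL{partialscommute} shows the composite $\delta_\bC$ is independent of the order in which the $\delta_{C_i}$ are applied. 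Consequently $d_\te([\alpha])$ is constant on each cell, every curve of which carries identical combinatorial singularity data.

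Next I would reduce an arbitrary isotopy to elementary moves. Given $\Phi\colon I\to (F_p\setminus F_{p-1})S_m$ joining $\phi\in\te$ to $\phi'\in\te'$, a general-position argument inside the stratum homotopes $\Phi$ rel endpoints so that it lies in open codimension-one cells of $S_m$ away from finitely many parameter values, at each of which it crosses a single cell of codimension two in $P_m$. Every cell it meets has complexity exactly $p$: cells of complexity $p+1$ do not lie in $(F_p\setminus F_{p-1})S_m$, and neither do cells of smaller complexity. Across an arc interior to a codimension-one cell the data $(\vsig;\bC;\rho)$ is constant, so $\delta_\bC(\te)$, and hence $d_\te([\alpha])$, does not change. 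It therefore suffices to prove that a single crossing of a complexity-$p$ codimension-two cell $\f$ preserves the derivative.

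The main step is the local analysis at $\f$. Taking a two-dimensional disk in $P_m$ transverse to $\f$, the codimension-one cells of $S_m$ incident to $\f$ appear as rays from the center which subdivide the disk into knot chambers, and the crossing carries the path from the ray representing $\te$ to the ray representing $\te'$. Because $\f$ has complexity $p$ it bounds no cell of complexity $p+1$ by \refD{complexity}, so resolving the coincidences recorded by $\bC$ removes precisely those coincidences and creates no new ones as the parameter passes through $\f$; each term of the coboundary therefore varies by an honest isotopy of knots in $K_m$. Making this precise term by term produces a sign-preserving bijection between the chambers of $\delta_\bC(\te)$ and those of $\delta_{\bC'}(\te')$ with isotopic paired chambers, the two resolutions on either side of $\f$ being related by the renaming action of $\Sigma_\bC$ already dispatched above. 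Evaluating $[\alpha]$, which is constant on isotopy classes, the two signed sums coincide, so $d_\te([\alpha])=d_{\te'}([\alpha])$.

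I expect the sign bookkeeping in the last step to be the principal obstacle. One must check that the alternating signs arising from the join combinatorics in \refD{cobound}, together with the factors $(-1)^{|C_i|}$, match across the crossing so that the bijection is sign-preserving rather than sign-reversing, and that this holds uniformly for the degenerate singularity types allowed here (isolated triple points and zero-length pauses) and not merely for simple double points. The commutativity in \refL{partialscommute} reduces the verification to one coincidence component at a time, but reconciling the $(-1)^{|C_i|}$ factors with the orientation of the transverse disk at $\f$ is the computation that must be carried out with care.
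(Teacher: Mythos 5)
The paper offers no actual argument here (it asserts the proposition follows from ``a straightforward computation''), so your proposal must stand on its own, and it does not: the crucial local step fails. Your reduction of an isotopy to finitely many wall-crossings inside a complexity stratum is the natural strategy (modulo a bookkeeping slip: inside $(F_p\setminus F_{p-1})S_m$ the top cells have codimension $p$ in $P_m$ and the crossed cells codimension $p+1$; your ``codimension one/two'' is only right for $p=1$). The gap is the claim that crossing a codimension-$(p+1)$ cell $\f$ of complexity $p$ ``removes precisely those coincidences and creates no new ones,'' so that each chamber of $\delta_\bC(\te)$ moves by an honest isotopy in $K_m$ to the matching chamber on the other side. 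That is true only when $\f$ differs from the incident codimension-$p$ cells by a \emph{spurious} equality (one producing no intersection). It fails exactly at the degenerate cells the proposition is meant to cover. Take $\f$ a transverse triple point: codimension $3$, complexity $2$, with three two-element blocks in the three directions. Near $\f$ the complexity-$2$ stratum is a cone on six points (six codimension-$2$ cells joined at $\f$), so a path from the cell $\e_+$ with equalities $C_1,C_2$ and third coordinate difference positive to the cell $\e_-$ with the same equalities and that difference negative is an isotopy in the paper's sense and must pass through $\f$. The four resolution chambers of $\e_+$ are the octants $(\epsilon_1,\epsilon_2,+)$ and those of $\e_-$ are $(\epsilon_1,\epsilon_2,-)$; paired chambers are separated by the third wall of the discriminant, i.e.\ they differ by a crossing change, not by an isotopy in $K_m$. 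No sign convention repairs this: one computes that $d_{\te_+}([\alpha])-d_{\te_-}([\alpha])$ equals, up to an overall sign, $\sum_{\vec{\epsilon}}\epsilon_1\epsilon_2\epsilon_3\,[\alpha](\e(\vec{\epsilon}))$, which is precisely the Vassiliev derivative of $[\alpha]$ at the lift of $\f$ itself, and there is no reason for it to vanish.

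Indeed it does not vanish in examples: realize the standard three-crossing trefoil diagram as a plumbers' curve whose three crossing strands are mutually orthogonal pipes through one point; the eight octants around $\f$ are then the right trefoil, six unknots, and the left trefoil, and for $[\alpha]$ the indicator of the right trefoil the two derivatives across $\f$ are $1$ and $0$. So your crossing-by-crossing argument cannot close at such cells, and the issue is not the sign bookkeeping you flag as the ``principal obstacle'' --- the paired chambers are not even isotopic, so no sign matching helps. (This jump is exactly the mechanism behind the classical four-term relation; it vanishes only for invariants whose derivative at $\f$ vanishes, e.g.\ the top derivative of a finite-type invariant, not for arbitrary $[\alpha]\in\bar{H}^0(K_m)$.) What your argument genuinely establishes is invariance along paths that never change the coincidence data $\bC$, i.e.\ crossings of spurious walls. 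To prove the proposition as stated you would have to either restrict the notion of isotopy so that the singularity data is preserved along the path, or confront the fact that, read literally with the complexity-stratum definition of isotopy, the statement appears to fail at triple points; as written, your proof silently assumes the former while claiming the latter.
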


Our definition agrees with the classical definition when the plumbers' curves are sufficiently articulated and the singularities are of the expected variety.

\begin{defn}
Call an isolated singularity of a plumbers' curve \emph{stable} if it is separated from any other singular point by at least one vertex. A cell consisting of singular plumbers' curves whose singularities consist only of stable double and triple points is a \emph{stable cell}.
\end{defn}

We remark that our notion of stability differs from that which arises in the construction of the classical Vassiliev spectral sequence. As discussed in Section \ref{conceptual}, the classical approach makes use of a stable range in the combinatorics of the spectral sequence. Because the plumbers' curves form an inverse system of model spaces, we instead draw on geometric properties of maps which become fixed after the application of some number of stabilization maps.

Recall that a singular curve is said to respect a chord diagram if the endpoints of each chord are identified in the image of the map (c.f. \cite{NatanOnVas}). Chord diagrams are usually considered up to diffeomorphisms of the spine which do not change the order of the endpoints of the chords, and we will abuse notation and call such a class of chord diagrams a chord diagram. When we say a map respects a chord diagram, we will mean that it respects some member of its equivalence class. 

Each stable cell $\e$ has associated to it some maximal chord diagram which its elements respect, so to evaluate the Vassiliev derivative of $[\alpha] \in \mathcal{FT}_n$ across its lift $\te$, one evaluates a representative weight system for $[\alpha]$ on this chord diagram.

The following lemma justifies the term ``stable" and follows immediately from \refD{VasDeriv}. In particular, it says that, on stable cells, our notion of Vassiliev derivative agrees with that of Birman and Lin \cite{BirmanLin}.

\begin{lem} \label{L:stablecell}
Let $\e\in \Cell_\bullet(S_m)$ be a stable cell of complexity $n$ and $[\alpha]\in H^0(K_m)$. The codimension one lift of such a cell, $\te = \pi_{\#}^{-1}(\e) = \e \times \Delta^{n-1}$. Further, $d_{\te}([\alpha])$ is given by evaluation of a representative weight system for $[\alpha]$ on the maximal chord diagram respected by $\e$ or, equivalently, by evaluation of $[\alpha]$ on an alternating sum of plumbers' knots produced by resolving in all possible ways the singularities of some map in $\e$.
\end{lem}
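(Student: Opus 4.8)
The plan is to unwind \refD{VasDeriv} on a stable cell and match the result term-by-term with the Birman--Lin construction; I would split this into one structural and two computational steps. For the structural claim, let $\e$ be a \emph{simple} stable cell of complexity $n$. Each of its $n$ transverse double points is a single coordinate coincidence, so the partition $\bC$ has exactly $n$ two-element blocks and $\cx(\e)=\sum_i(|C_i|-1)=n$ is also the codimension of $\e$ in $P_m$. A two-element block $\{a,b\}_d$ contributes the single transposition $(a\;b)_d$, hence a single join vertex, and the join of the $n$ resulting vertices is $\Delta^{n-1}$. Thus the top cell of the fibre $\pi_\#^{-1}(\e)$ is $\e\times\Delta^{n-1}$, of dimension $(3m-3-n)+(n-1)=3m-4$, which is exactly the codimension-one lift $\te\in C_{3m-4}(\tS_m)$ demanded by \refD{VasDeriv}.

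Next I would compute $\delta_\bC(\te)$ directly. Since each block is a pair of adjacent indices, $\rho(C_i)$ is a single, automatically sequential, transposition, so \refD{cobound} gives the nonzero case; with $|C_i|=2$ the factor $(-1)^{|C_i|}$ is $+1$, so $\delta_{C_i}$ records the signed difference of the two cells in which $a$ precedes $b$ and in which $b$ precedes $a$ --- the two local resolutions of that crossing. \refL{partialscommute} lets me compose the $\delta_{C_i}$ in any order; removing every $\rho(C_i)$ empties $\rho$, and under the convention $\te(\vsig;\bC;\emptyset)=\e(\vsig)\in\Cell_{3m-3}(K_m)$ the composite lands on honest knot cells. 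Expanding the composite $\delta_{C_1}\cdots\delta_{C_n}(\te)$, each factor contributing one $+$ and one $-$ term, then produces the alternating sum $\sum_{S\subseteq[n]}(-1)^{|S|}\e(\vsig_S)$ over the $2^n$ resolutions, so $d_\te([\alpha])=[\alpha](\delta_\bC(\te))=\sum_S(-1)^{|S|}[\alpha](\e(\vsig_S))$.

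To finish I would identify this with the classical picture. The orderings $\vsig[\rho(C_i)]^{\pm}$ are precisely the two resolutions of a transverse double point and $(-1)^{|S|}$ is the usual alternating sign, so the displayed sum is verbatim the iterated Vassiliev derivative of \cite{BirmanLin} evaluated on any curve in $\e$. For $[\alpha]\in\mathcal{FT}_n$ this $n$-th derivative depends only on the identification pattern, i.e.\ on the maximal chord diagram respected by $\e$, and the number it returns is the associated weight system; citing \cite{BirmanLin} for this last fact yields the two equivalent descriptions in the statement.

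The main obstacle is the interaction of the sign conventions with the non-uniqueness of names. I would have to verify that the lexicographic rule selecting $\vsig[\rho(C_i)]^{\pm}$ together with the $(-1)^{|C_i|}$ of \refD{cobound} reproduce the Birman--Lin alternating convention on the nose, and that the answer is invariant under the relabelling $\e(\rho\vsig;\bC)$, $\rho\in\Sigma_\bC$, permitted by \refD{boundarycell}. The genuinely delicate case is a stable cell carrying a triple point: there the three coincidences lie in distinct directions and contribute a join factor $\Delta^0\join\Delta^0\join\Delta^0=\Delta^2$, so the codimension is three while $\cx=2$, and the clean identity $\te=\e\times\Delta^{n-1}$ holds literally only for simple cells. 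For such a cell I would show that the eight-term resolution produced by $\delta_\bC$ collapses, via the one-term (4T-type) relation for weight systems, to the value Birman--Lin assign to the triple point, which is what ``evaluation of a representative weight system on the maximal chord diagram'' must mean in that setting.
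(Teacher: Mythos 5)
Your proposal is correct and takes essentially the paper's route: the paper supplies no separate argument (it asserts the lemma ``follows immediately'' from \refD{VasDeriv}), and its subsequent Example is precisely your middle step --- expanding $\delta_{\bC}=\delta_{C_1}\cdots\delta_{C_n}$ into the signed sum over the $2^n$ orderings $\vsig[\rho(C_i)]^{\pm}$, landing on knot cells via the convention $\te(\vsig;\bC;\emptyset)=\e(\vsig)$, and matching the result with the Birman--Lin alternating sum / weight-system evaluation --- together with your dimension count $(3m-3-n)+(n-1)=3m-4$.

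One remark worth preserving: your closing caveat about triple points is a genuine imprecision in the lemma as stated, not a defect of your argument. A stable triple point contributes three two-element coincidence blocks (one in each direction) but only complexity $2$, so for a stable cell containing triple points the top cell of $\pi_{\#}^{-1}(\e)$ is $\e\times\Delta^{c-1}$ with $c$ the codimension, which exceeds $\e\times\Delta^{n-1}$ for $n=\cx(\e)$; likewise the ``maximal chord diagram'' then has more than $n$ chords, so the weight-system phrasing needs the reinterpretation you sketch. The paper's Example silently restricts to cells whose singularities are all double points and never addresses this case, so your explicit separation of the simple case (where the lemma holds on the nose) from the triple-point case (where a 4T-type collapse must be proved) is a strengthening of, not a deviation from, the paper's treatment.
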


\begin{example}
Let $\e\in \Cell_{3m-3-n}(S_m)$ be a stable cell whose points are singular curves with precisely $n$ double points. The cell $\e$ has singularity data $\bC = \{\{a_i, b_i\}_{d_i}\}$, $i \in \{1\dots n\}$ and lifts to a codimension one cell $\te(\vsig; \bC; \rho) = \e \times \Delta^{n-1}$ whose second factor has vertices are labeled by transpositions $\rho_i=\rho(\{a_i, b_i\}_{d_i})=(a_i \; b_i)_{d_i}$. 

One computes that $\delta_\bC(\te(\vsig; \bC; \rho)) = \sum_i (-1)^i \te(\vsig[\rho_1]^{\epsilon_{i,1}}[\rho_2]^{\epsilon_{i,2}}\cdots[\rho_n]^{\epsilon_{i,n}}; \bC; \emptyset)$, where $\epsilon_{i,j}$ is the $j$th digit of the binary representation of $i$ using digits from $\{+, -\}$.

Therefore, for $[\alpha]\in H^0(K_m)$, $d_{\te}([\alpha])$ is the alternating sum of the value of $[\alpha]$ on the $2^n$ cells of $K_m$ cobounding $\e$. 
\end{example}

The ability to define the Vassiliev derivative for any singularity of plumbers' curves along with the cell structure on $\tS_n$ provides us with a great deal of information. In particular, it provides us with a canonical chain representative for the dual to a plumber's knot invariant, which we now construct.

The condition in \refD{cobound} that  each $\rho(C_i)$ be sequential for $\vsig$ implies that non-zero Vassiliev derivatives only occur for cells of dimension $3m-4$. The internal faces of such a cell are indexed by forgetting one transposition $\tau$ in some $\rho_i$. Note that in order to have internal faces, $|\rho|$ must be greater than one.

Given such a face $\tf = \tf(\vsig; \bC; \rho \setminus \{\tau\})$, the collection of cells which are incident to the face are of two types: internal cofaces which also lie in $\pi_\#^{-1}(\e(\vsig; \bC))$ and external cofaces which appear in some $\pi_\#^{-1}(\e(\vsig'; \bC; \rho \setminus\{\tau\}))$.

The internal cofaces which are incident to $\tf$ are of the form $\te(\vsig; \bC; (\rho \setminus\{\tau\}) \cup \{\tau'\}\})$, for some $\tau'\in \Sigma_{C_i}$. Precisely those $\tau'$ whose addition to $\rho \setminus\{\tau\}$ results in a new collection sequential for $\vsig$ correspond to cells with non-zero Vassiliev derivative. 

Write $\rho_i = \{(\rho_i(1)\; \rho_i(2)), (\rho_i(2)\; \rho_i(3)), \dots,(\rho_i({k-1})\; \rho_i(k))\}$. There are two possibilities: $\tau$ is an ``endpoint", either $(\rho_i(1)\; \rho_i(2))$ or $(\rho_i({k-1})\; \rho_i(k))$, or removing $\tau = (\rho_i(\ell)\; \rho_i({\ell+1}))$ splits $\rho_i$ into two disjoint collections sequential for $\vsig$. 

In the first case, there are two choices of transposition $\tau'\in \Sigma_{C_i}$ whose addition will result in a collection sequential for $\vsig$: $\tau$ and $(\rho_i(1)\; \rho_i(k))$. Otherwise, any of the transpositions $(\rho_i(1)\; \rho_i({\ell+1}))$, $(\rho_i(\ell)\; \rho_i({\ell+1)})$, $(\rho_i(1)\; \rho_i({k}))$ or $(\rho_i(\ell)\; \rho_i({k}))$ ``reattach" them into a single collection sequential for $\vsig$, while the rest result in non-sequential collections. Thus, there are always either two or four internal cofaces which can contribute a non-zero coefficient to $\tf$.

In constrast, there are always two external cofaces incident to a given $\tf$ which can contribute non-zero coefficients. As mentioned above, these are the cells $\te(\vsig;\bC'; \rho \setminus\{\tau\}) \in \pi_\#^{-1}(\e(\vsig; \bC'))$ and $\te(\tau\vsig;\bC'; \rho \setminus \{\tau\}) \in \pi_\#^{-1}(\e(\tau\vsig; \bC'))$, where $\bC'$ is the refinement obtained from $\bC$ by ``splitting the appropriate $C_i$ along $\tau$" and $\tau$ acts on $\vsig$ by block permutation of the elements in these two new partition elements in $\bC$. 

From this information, can now deduce the following ``Taylor's theorem". 

\begin{thm}\label{T:chainreps}
Let $[\alpha]\in \bar{H}^0(K_m)$. The lift of its Alexander dual cycle $[\alpha^\vee]$ to $H_{3m-4}(\tS_m)$ has a chain representative given by $\tilde{\alpha}^\vee = \sum_{\te\in C_{3m-4}(\tS_m)} (-1)^{o(\te)}d_{\te}([\alpha])\te$. 
\end{thm}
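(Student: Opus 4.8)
The plan is to prove two independent facts: that $\tilde{\alpha}^\vee$ is a cycle in $C_{3m-4}(\tS_m)$, and that its class is the $\pi_*$-preimage of $[\alpha^\vee]$. Since \refP{blowupcorrect} makes $\pi_*\colon H_{3m-4}(\tS_m)\to H_{3m-4}(S_m)$ an isomorphism, the second fact reduces to showing that the chain map $\pi_\#$ sends $\tilde{\alpha}^\vee$ to the canonical chain representative $\alpha^\vee$ of $[\alpha^\vee]$ recalled in Section~\ref{conceptual}; granting closedness, $\pi_*[\tilde{\alpha}^\vee]=[\alpha^\vee]$ follows at once. I would dispatch this identification first, as it is the shorter half, and then concentrate on the cycle condition.

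For the identification, note that $\pi_\#$ forgets $\rho$ and hence acts with degree $\pm1$ on a top cell exactly when its join factor is a point---that is, when $\te$ is a simple double-point cell $\e\times\Delta^0$---and degenerately, hence as $0$, on every top cell with a positive-dimensional fiber. The surviving cells $\e$ are precisely the codimension-one strata of $S_m$, and for each \refL{stablecell} (with $n=1$) identifies $d_{\te}([\alpha])$ with the difference of the values of $[\alpha]$ on the two chambers of $K_m$ that cobound $\e$. Thus $\pi_\#(\tilde{\alpha}^\vee)$ is the jump cochain carried by the codimension-one walls of $S_m$, which is exactly $\alpha^\vee$.

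The cycle condition is the main work. I would reformulate it as the statement that the assignment $\te\mapsto(-1)^{o(\te)}d_{\te}([\alpha])$ is a cellular cocycle, i.e. that for every $(3m-5)$-cell $\tf$ the signed sum $\sum_{\te\supset\tf}[\te:\tf]\,(-1)^{o(\te)}d_{\te}([\alpha])$ vanishes. Here the coface enumeration carried out just before the statement is exactly what is required: such a face may be written $\tf=\tf(\vsig;\bC;\rho\setminus\{\tau\})$, and its nonzero-derivative cofaces are the two or four internal cofaces $\te(\vsig;\bC;(\rho\setminus\{\tau\})\cup\{\tau'\})$ together with the two external cofaces $\te(\vsig;\bC';\rho\setminus\{\tau\})$ and $\te(\tau\vsig;\bC';\rho\setminus\{\tau\})$, where $\bC'$ splits the active block $C_i$ along $\tau$. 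Because all of these cofaces share the restriction of $\rho$ to the components other than $C_i$, the coboundaries $\delta_{C_k}$ with $k\neq i$ are common and, by \refL{partialscommute}, factor out of $[\alpha](\delta_{\bC}(-))$. The vanishing then reduces to a single identity on the active block: the signed single coboundary $\delta_{C_i}$ of the internal cofaces must cancel, after evaluation by $[\alpha]$, the iterated coboundary $\delta_{C_i'}\delta_{C_i''}$ of the external cofaces. I would verify this by expanding \refD{cobound} in the endpoint case (two internal cofaces, one external split peeling off a singleton) and the interior case (four internal cofaces, one external split into two genuine blocks), checking that the resulting alternating sums of resolved knot chambers cancel either identically in $\Cell_{3m-3}(K_m)$ or as a cellular boundary of $P_m$, which $[\alpha]$ annihilates since it is a cocycle.

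The heart of the matter---and the step I expect to be the main obstacle---is precisely this comparison of a single length-$|C_i|$ coboundary with a composite of two shorter ones; it is a coassociativity, or higher-Leibniz, property of the Vassiliev derivative, and making it hold on the nose requires reconciling three sign sources: the orientation $o(\te)$, the incidence numbers $[\te:\tf]$, and the factors $(-1)^{|C_i|}$ in \refD{cobound}. I expect the interior (four-coface) case to be the delicate one, since there the removed transposition $\tau$ splits $\rho(C_i)$ into two sequential chains and the two external cofaces, differing by the block action of $\tau$ on $\vsig$, must jointly absorb four reorderings. If the face-by-face bookkeeping proves unwieldy, a cleaner route is to package the block coboundaries into a single map $\delta_\bullet\colon C_*(\tS_m)\to\Cell_{3m-3}(K_m)$ and prove once and for all that $\delta_\bullet\circ\partial$ lands in cellular boundaries of $P_m$; closedness of $\tilde{\alpha}^\vee$ then follows in one stroke from the cocycle condition on $[\alpha]$.
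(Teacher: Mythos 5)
Your proposal is correct and follows essentially the same route as the paper's own proof: agreement with the Alexander-dual representative is checked on the codimension-one double-point cells (the only top cells surviving $\pi_\#$), and closedness is verified face-by-face at each $(3m-5)$-cell using the internal/external coface enumeration, factoring out the inactive blocks via \refL{partialscommute} and cancelling the resulting sixteen terms. The only cosmetic deviation is your hedge that the final cancellation might require the cocycle property of $[\alpha]$ (annihilation of boundaries in $P_m$); in the paper's computation the sixteen terms cancel identically as chains, so the conclusion is simply $[\alpha](0)=0$ and no such appeal is needed.
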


\begin{proof}
We must check two things: that $\tilde{\alpha}^\vee$ agrees with a chain representative of $[\alpha^\vee]$ on cells which lift to homeomorphic copies of themselves, and that it is a cycle. 

The cells in $\Cell_{3m-4}(S_m)$ are all of the form $\e(\vsig;\{a_i, b_i\})$. These cells lift to cells of the form $\te=\te(\vsig; \{a_i, b_i\}; \{(a\; b)_i\})$. We see that $d_{\te}([\alpha])=(-1)^{o(\te)}([\alpha](\e(\vsig) - \e((a\;b)_i\vsig)$, the difference of the value of the invariant on the cobounding cells of $\e$, which is precisely the coefficient assigned to the cell by Alexander duality. Thus, if $\tilde{\alpha}^\vee$ is a cycle, it is a chain representative of the lift.

Now, it remains to show that for each cell $\tf\in C_{3m-5}(\tS_m)$, the total contribution of cells incident to $\f$ under the boundary map $d$ is zero. To do so, we will consider an arbitrary cell $\te\in C_{3m-4}(\tS_m)$ for which Vassiliev derivatve of $[\alpha]$ is non-zero, select one of its internal faces and compute the sum of the incidence coefficients of each of the face's cobounding cells. It suffices to consider internal faces of such cells, as every cell with a coface whose Vassiliev derivative is non-zero arises as such an internal face.

Write ${C_i}(\tau) = C_i' \sqcup C_i''$ for the refinement of $C_i$ by splitting at $\tau$, $\bC(\tau)$ for the corresponding refinement to $\bC$ and $\rho[\tau, i, j] = (\rho \setminus {\tau}) \cup (\rho_i\;\rho_j)$. Let $\partial$ be the standard coboundary of a cell in $C_*(\tS_m)$ and use our analysis of cofaces to compute that (up to a sign depending on choice of $\vsig$),

\begin{align*}
\partial(\tf(\vsig;\bC;\rho\setminus\{\tau\})) &= - \te(\vsig;\bC;\rho) + (-1)^{\ell}\te(\vsig;\bC;\rho[\tau, 1, \ell+1])+ (-1)^{k-\ell}\te(\vsig;\bC;\rho[\tau, \ell, k])\\
& \indent - (-1)^{k-1} \te(\vsig;\bC;\rho[\tau, 1, k])+\te(\vsig;\bC(\tau);\rho\setminus\{\tau\}) - \te(\tau\vsig;\bC(\tau);\rho\setminus\{\tau\})\\
&\indent + \textrm{cells with zero Vassiliev derivative.}
\end{align*}

Using \refL{partialscommute}, we can rewrite $\delta_{\bC}$ as $\delta_{\bC\setminus C_i}\delta_{C_i}$ and $\delta_{\bC\setminus C_i}\delta_{C_i'}\delta_{C_i''}$ for these two different types of cells. 

Expanding these coboundaries, we compute

\begin{align*}
d_{\delta(\tf)}([\alpha]) &= [\alpha] \bigl(
\delta_{\bC}\te(\vsig;\bC;\rho) + 
\delta_{\bC}\te(\vsig;\bC;\rho[\tau, 1, k]) + 
\delta_{\bC}\te(\vsig;\bC;\rho[\tau,1,\ell+1])\\
& \indent + \delta_{\bC}\te(\vsig;\bC;\rho[\tau,\ell, k])+
\delta_{\bC(\tau)}\te(\vsig;\bC(\tau);\rho\setminus\{\tau\}) + 
\delta_{\bC(\tau)}\te(\tau\vsig;\bC(\tau);\rho\setminus\{\tau\})\bigr)\\
&= [\alpha] \bigl( \delta_{\hat{\bC}_i} ( 
\delta_{C_i}(\te(\vsig;\bC;\rho) + 
\te(\vsig;\bC;\rho[\tau,1,k]) \\
& \indent + \te(\vsig;\bC;\rho[\tau,1,\ell+1]) + 
\te(\vsig;\bC;\rho[\tau,\ell,k])) \\
& \indent +\delta_{C_i'}\delta_{C_i''}(\te(\vsig;\bC(\tau);\rho\setminus\{\tau\}) + 
\te(\tau\vsig;\bC(\tau);\rho\setminus\{\tau\})))\bigr)\\
&= [\alpha] \bigl( \delta_{\hat{\bC}_i} ( 
\te(\vsig[\rho(C_i)]^-;\bC;\rho\setminus\rho(C_i)) - (-1)^{|C_i|}\te(\vsig[\rho(C_i)]^+;\bC;\rho\setminus\rho(C_i))\\
&\indent + \te(\vsig;\bC;\rho\setminus\rho(C_i)) - (-1)^{|C_i|}\te(w_0(C_i)\vsig;\bC\rho\setminus\rho(C_i)\bigr).
\end{align*}

All sixteen resulting terms cancel, so $d_{\delta{\tf}}([\alpha]) = [\alpha](0) = 0$, as required. 

\end{proof}

Of course, our choice of $\tilde{\alpha}^\vee$ is only well defined up to a boundary in the chain complex. We make the canonical choice that this boundary contributes zero, and call this canonical representative the \emph{Vassiliev-Taylor series for $[\alpha]$}. For purposes of computation, one can identify representatives of $[\alpha^\vee]$ in $C_*(\tS_m)$ with fewer non-zero coefficients by choosing certain boundary contributions also to be non-zero. 

\begin{defn}
Let $[\alpha]\in H^0(\mathcal{K})$. Define the \emph{Vassiliev system} of $[\alpha]$ to be the collection $\{\tilde{\alpha}_5^\vee,\tilde{\alpha}_6^\vee,\dots\}$ of the Vassiliev-Taylor series of its restrictions $[\alpha_m]\in H^0(K_m)$.
\end{defn}

By \refT{chainreps}, every knot invariant (even those not of finite type) is completely determined by its Vassiliev system. 

\subsection{The unstable Vassiliev spectral sequence}

We can now construct analogues of the Vassiliev spectral sequence for the spaces of plumbers' knots. The cell complex on each plumbers' knot space will allow us to analyze these sequences explicitly. We will show that the inverse limit of these spectral sequences contains the collection of finite-type invariants.

\begin{defn}
Let $E^r_{p',q'}(m)$ be the homology spectral sequence of the complexity filtration on $\tS_m$ with $E^0$ page given by $E^0_{p',q'}(m) = \left(F_{p'}/ F_{p'-1}\right)C_{q'-p'}(\tS_m)$ and converging to $H_*(\tS_m)\cong H_*(S_m)$. The corresponding spectral sequence in cohomology $E_r^{p,q}(m)$ is obtained by reindexing $p = -p', q = (3m-4)-q'+2p'$, We call this the $m$th \emph{unstable Vassiliev spectral sequence}.
\end{defn}

By Alexander duality, $E_r^{p,q}(m) \implies \bar{H}^*(K_m)$. 

A stable cell $\te$ cannot lie in the boundary of any $\tilde{f} \in C_{3m-3}(\tS_m)$ because such a cobounding cell can only arise internally to $\pi_{\#}^{-1}(\e)$, and $\te$ is the cell of highest dimension lying over $\e$. Thus, any cycle involving $\te$ will represent non-zero homology class. Indeed, in this fashion we can identify a collection of non-zero cycles in $E_1^{-n,n}(m)$.

\begin{defn}
Fix $\te\in C_{3m-4}(\tS_m)$ a stable cell of complexity $n$. Let $[N(\te)]\in E_1^{-n,n}(m)$ be the unique minimal cycle containing $\te$.
\end{defn}

Such a cycle $[N(\te)]$ is simply the sum of all of the cells which can be reached from $\te$ by passing through sequences of non-zero codimension one boundary cells of complexity $n$. It can be chosen to be minimal because the filtration quotients are, up to homotopy, wedges of spheres. Because $\e$ is stable, such a cell cannot be the boundary of any other cell, so this cycle is non-trivial.

Using such cycles, we can see that finite type invariants arise in the expected complexity in limit of the unstable spectral sequences.

\begin{thm}
$\mathcal{FT}_n \into \invlim E_\infty^{-n,n}(m)$.
\end{thm}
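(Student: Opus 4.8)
The plan is to construct, for each sufficiently large $m$, a homomorphism $\mathcal{FT}_n \to E_\infty^{-n,n}(m)$, to check that these are compatible with the maps of the inverse system, and then to establish injectivity of the resulting map into the limit. For the first step I would begin with a class $[\alpha]\in\mathcal{FT}_n\subseteq\bar{H}^0(\mathcal{K})$ and restrict it to $[\alpha_m]\in\bar{H}^0(K_m)$; here the identification $\bar{H}^0(\mathcal{K})=\invlim\bar{H}^0(K_m)$ (the Milnor sequence for $\mathcal{K}=\dirlim K_m$ has no $\lim^1$ term in degree zero) supplies the restrictions for free. By \refT{chainreps}, the Alexander dual of $[\alpha_m]$ lifts to the Vassiliev-Taylor series $\tilde{\alpha}_m^\vee=\sum_{\te}(-1)^{o(\te)}d_{\te}([\alpha])\,\te$. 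Because $[\alpha]$ is of type $n$, its Vassiliev derivatives of order greater than $n$ vanish; by \refL{stablecell} this forces $d_{\te}([\alpha])=0$ on every stable cell of complexity exceeding $n$, while on stable cells of complexity exactly $n$ the coefficient is the value of a representative weight system for $[\alpha]$. Consequently $[\alpha_m]$ lies in filtration degree $n$, and I would take its image in the associated graded $E_\infty^{-n,n}(m)=(F_n/F_{n-1})\bar{H}^0(K_m)$, represented by the complexity-$n$ part of $\tilde{\alpha}_m^\vee$, as the value of the homomorphism.

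For the second step I would use the observation recorded earlier that the stabilization maps $\iota_m\co K_m\into K_{m+1}$ leave the image of a plumbers' curve unchanged and therefore respect the complexity filtration. Hence each $\iota_m$ induces a morphism of filtered complexes, and thus a map of spectral sequences $E_r^{p,q}(m+1)\to E_r^{p,q}(m)$, making $\{E_\infty^{-n,n}(m)\}_m$ an inverse system. Since the restrictions satisfy $\iota_m^{*}[\alpha_{m+1}]=[\alpha_m]$ and stabilization carries a stable complexity-$n$ cell to a stable complexity-$n$ cell respecting the same chord diagram, the images of $[\alpha]$ assemble into a compatible family and hence into an element of $\invlim E_\infty^{-n,n}(m)$. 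This produces the desired map $\mathcal{FT}_n\to\invlim E_\infty^{-n,n}(m)$.

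The heart of the argument, and the step I expect to be the main obstacle, is injectivity. I would phrase it through the pairing between cohomology and the distinguished homology classes $[N(\te)]$ attached to stable complexity-$n$ cells. The key fact is that each such $\te$ is the top-dimensional cell lying over its image $\e$, so $[N(\te)]$ is not a boundary and survives as a non-trivial permanent cycle; pairing $[\alpha_m]$ against it recovers exactly $d_{\te}([\alpha])$, the corresponding weight-system value. Thus the composite of $\mathcal{FT}_n\to E_\infty^{-n,n}(m)$ with evaluation on the $[N(\te)]$ reconstructs the weight system of $[\alpha]$ on those chord diagrams realizable by stable cells in $K_m$. A non-zero element of $\mathcal{FT}_n$ has non-zero weight system by the classical theory, and for $m$ large enough every chord diagram with $n$ chords is realized by a stable plumbers' curve; therefore some stable $\te$ at some level $m$ has $d_{\te}([\alpha])\neq 0$, making the image in $E_\infty^{-n,n}(m)$, and hence in the inverse limit, non-zero.

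The delicate points to nail down are (i) that the $[N(\te)]$ are genuinely detected at $E_\infty$ rather than merely cycles on the $E_1$ page, which I would establish from their non-triviality in $H_*(\tS_m)$ together with the pairing against Vassiliev-Taylor representatives of honest invariants; and (ii) that the contributions of unstable cells to $\tilde{\alpha}_m^\vee$ do not interfere with this pairing, for which I would use that unstable singularity data is confined to lower filtration in the stable range and is washed out upon passing to the inverse limit. Controlling this interaction between the extra singularity information carried by unstable cells and the stable-range weight-system data is where the real work lies.
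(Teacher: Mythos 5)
Your third paragraph --- the detection step --- is essentially the paper's entire proof: given a non-zero $[\alpha]\in\mathcal{FT}_n$, fix a representative weight system and chord diagrams $c_i$ with which it pairs non-trivially, realize curves respecting the $c_i$ as stable cells $\e_i$ in $S_m$ for $m$ large, invoke \refL{stablecell} to identify the Vassiliev derivative across the lifts $\te_i$ with weight-system evaluation, and conclude that $\sum_i\langle\alpha,c_i\rangle[N(\te_i)]$ is non-trivial in $E_\infty^{-n,n}(m)$; the universal property of the inverse limit finishes. Your delicate point (i) is resolved in the paper exactly as you suggest: a stable codimension-one cell $\te$ is the top-dimensional cell lying over $\e$, so it cannot appear in any boundary, and any cycle containing it represents a non-zero class in $H_*(\tS_m)$, hence survives to $E_\infty$.

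The genuine gap is in your first paragraph, and it is the one you flag in point (ii) but do not close. From ``derivatives of order greater than $n$ vanish'' together with \refL{stablecell} you obtain $d_{\te}([\alpha_m])=0$ only on \emph{stable} cells of complexity exceeding $n$. The Vassiliev-Taylor series of \refT{chainreps} also carries coefficients on unstable cells --- simple cells in which one pipe meets two or more others, triple points, pauses, non-transverse intersections --- and such cells can have complexity greater than $n$: a simple but unstable cell has complexity equal to its number of double points, and for non-simple cells complexity is a maximum over cofaces. Neither the classical type-$n$ condition nor \refL{stablecell} (which explicitly requires stability) controls those coefficients, so the assertion that $[\alpha_m]$ ``lies in filtration degree $n$'' is unsupported; your proposed repair, that unstable singularity data is confined to lower filtration, is contradicted by the examples just given, and ``washed out in the limit'' is not an argument. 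The paper avoids this problem entirely by never routing the map through the full dual chain: the image of $[\alpha]$ is defined directly as the combination $\sum_i\langle\alpha,c_i\rangle[N(\te_i)]$ of distinguished stable cycles, so no statement about the filtration of $\tilde{\alpha}_m^\vee$ is ever needed. To salvage your version of the homomorphism you would have to show that $\tilde{\alpha}_m^\vee$ is homologous to a cycle supported in complexity at most $n$, which is precisely the kind of control over unstable singularities that the theory does not yet provide.
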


\begin{proof}
Let $[\alpha]\in \bar{H}^0(\mathcal{K})$ be an invariant of type $n$. Fix a representative linear combination of weight systems for $[\alpha]$, a linear combination of chord diagrams $\sum c_i$ with which this representative pairs non-trivially and a collection of singular curves $\Gamma=\{\gamma_i | \gamma_i \textrm{ respects } c_i\}$. 

Choose an integer $m(\Gamma)$ large enough that all of the $\gamma_i\in \Gamma$ are represented in $S_{m(\Gamma)}$ by stable curves lying in cells $\e_i$. This ensures that we can resolve the singularities individually, so all of the topological knot types necessary to apply \refL{stablecell} are represented in $K_{m(\Gamma)}$. Write $[\alpha_{m(\Gamma)}]\in H^0(K_{m(\Gamma)})$ for the restriction of $[\alpha]$ to $K_m$. Then $d_{\sum_i \te(\gamma_i)}([\alpha_{m(\Gamma)}]) = \sum_i\langle [\alpha], c_i\rangle \neq 0$ and so the class $\sum_i \langle \alpha, c_i\rangle[N(\te_i)]\in E_\infty^{-n,n}(m)$ is nontrivial. 

Applying the universal property of the inverse limit, we see that $[\alpha]$ maps non-trivially to $\lim E_\infty^{-n,n}$.
\end{proof}






\bibliographystyle{plain}
\bibliography{../central_bibliography}

\end{document}